\setlist[enumerate]{label={\textnormal{(\roman*)}}}
\newtheorem{theorem}{Theorem}[section]
\newtheorem{lemma}[theorem]{Lemma}
\newtheorem{corollary}[theorem]{Corollary}
\theoremstyle{definition}
\newtheorem{conjecture}[theorem]{Conjecture}
\tikzstyle{vertex}=[circle, draw, inner sep=0pt, minimum size=4pt,fill=black]
\newcommand{\vertex}{\node[vertex]}
\tikzset{->-/.style={decoration={
  markings,
  mark=at position #1 with {\arrow{Triangle}}},postaction={decorate}}}
\renewcommand{\tt}[1]{\textnormal{\texttt{{#1}}}}
\newcommand{\circular}[1]{\langle{#1}\rangle}
\title{Lengths of extremal square-free ternary words}
\author{Lucas~Mol and Narad~Rampersad\footnote{The work of Narad Rampersad is supported by the Natural Sciences and Engineering Research Council of Canada (NSERC), [funding reference number 2019-04111].}\\
\small Department of Mathematics and Statistics\\
\small The University of Winnipeg\\
\small 515 Portage Ave.\\
\small Winnipeg, MB, Canada\\
\small R3B 2E9\\
\small \{l.mol, n.rampersad\}@uwinnipeg.ca}
\date{}
\begin{document}

\maketitle

\begin{abstract}
\noindent
A square-free word $w$ over a fixed alphabet $\Sigma$ is \emph{extremal} if every word obtained from $w$ by inserting a single letter from $\Sigma$ (at any position) contains a square.  Grytczuk et al.\ recently introduced the concept of extremal square-free word, and demonstrated that there are arbitrarily long extremal square-free ternary words.  We find all lengths which admit an extremal square-free ternary word.  In particular, we show that there is an extremal square-free ternary word of every sufficiently large length.  We also solve the analogous problem for circular words.

\noindent
{\bf MSC 2010:} 68R15

\noindent
{\bf Keywords:} square-free word; extremal square-free word
\end{abstract}

\section{Introduction}

Throughout, we use standard definitions and notations from combinatorics on words (see~\cite{LothaireAlgebraic}).  The word $u$ is a \emph{factor} of the word $w$ if we can write $w=xuy$ for some (possibly empty) words $x,y$.  A word is \emph{square-free} if it contains no factor of the form $xx$, where $x$ is a nonempty word.
Early in the twentieth century, Norwegian mathematician Axel Thue demonstrated that one can construct arbitrarily long square-free words over a ternary alphabet (see~\cite{Berstel1995}). Thue's work is recognized as the beginning of the field of combinatorics on words~\cite{BerstelPerrin2007}.

Let $w$ be a word over a fixed alphabet $\Sigma$.  A \emph{left (right) extension} of $w$ is a word of the form $aw$ ($wa$, respectively), where $a\in \Sigma$.  We say that a square-free word $w$ is \emph{maximal} if both every left extension of $w$ contains a square, and every right extension of $w$ contains a square.  Bean, Ehrenfeucht, and McNulty~\cite{BEM1979} demonstrated that every square-free word over a fixed alphabet $\Sigma$ is a factor of a maximal square-free word over $\Sigma$.  (In fact, Bean, Ehrenfeucht, and McNulty established this result not only for square-free words, but for $k$th-power free words for every integer $k\geq 2$.)  A corollary is that there are arbitrarily long maximal square-free words over any alphabet of size at least $3$.

Grytczuk et al.~\cite{Grytczuk2019} recently introduced a variant of maximal square-free words, in which extensions not just at the beginning and the end, but at any point in the interior of the word, are considered.  Let $w$ be a word over a fixed alphabet $\Sigma$.  An \emph{extension} of $w$ is a word of the form $w'aw''$, where $a\in \Sigma$ and $w'w''=w.$  
We say that $w$ is \emph{extremal square-free} if $w$ is square-free, and there is no square-free extension of $w$.

Grytczuk et al.~\cite{Grytczuk2019} demonstrated that there are arbitrarily long extremal square-free ternary words.  In this article, we describe exactly those integers $n$ for which an extremal square-free ternary word of length $n$ exists.  In particular, we find that there is an extremal square-free ternary word of every sufficiently large length.  This confirms a conjecture of Jeffrey Shallit~\cite{ShallitConjecture}.

\begin{theorem}\label{AllLongSquareFree}
Let $n$ be a nonnegative integer.  Then there is an extremal square-free word of length $n$ over the alphabet $\Gamma=\{\tt{a},\tt{b},\tt{c}\}$ if and only if $n$ is in the set
\begin{align*}
\mathcal{A}&=\{25,41,48,50,63,71,72,77,79,81,83,84,85\}\cup\{m\colon\ m\geq 87\}.
\end{align*}
\end{theorem}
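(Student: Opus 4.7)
My plan is to split the theorem into two tasks with very different flavors. The complement of $\mathcal{A}$ in $\mathbb{Z}_{\geq 0}$ lies entirely in $\{0,1,\dots,86\}$, so the ``no missing large length'' direction is finite. I would dispose of it by a bounded computer search: enumerate all square-free ternary words of each length $n\leq 86$ by depth-first backtracking, and test at every such word whether any of the $3(n+1)$ single-letter insertions yields another square-free word. This simultaneously confirms that no extremal square-free word exists at any length in $\{0,\dots,86\}\setminus\mathcal{A}$, and produces explicit witnesses for the thirteen small lengths listed in $\mathcal{A}$.

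The substantive part is exhibiting an extremal square-free ternary word of every length $n\geq 87$. The strategy I would pursue is to construct a small number of \emph{parameterized families} of the form $P_m = x v^m y$, where $x$, $v$, $y$ are fixed ternary factors, $|v|=d$, and $m$ ranges over all sufficiently large integers. A single such family produces extremal square-free words of length $|x|+|y|+md$, covering one residue class modulo $d$; using several families whose base lengths cover every residue modulo $d$ then yields every sufficiently large $n$. Matching the threshold to exactly $87$ is then a direct check on the specific base lengths and short-length computation.

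For each proposed family $P_m$, two properties must be verified uniformly in $m$: square-freeness and extremality. Square-freeness is the easier of the two: once $v$ itself is square-free and not a nontrivial repetition, any putative square in $P_m$ must straddle one of the boundaries $x\mid v^m$ or $v^m\mid y$, and those boundary configurations reduce to finitely many cases checkable by hand. Extremality is the delicate part: it demands that \emph{every one} of the $3(|P_m|+1)$ insertions create a square, and the blocking squares must be compatible with the periodicity of $v^m$ so that they persist as $m$ grows. The key observation I would use is that positions inside the bulk $v^m$ come in only $d$ equivalence classes under the period of $v$: inserting a letter at position $i$ and at position $i+d$ produces locally isomorphic neighborhoods, so a square blocking an insertion in one copy of $v$ near the boundary propagates, by periodicity, to every interior copy of $v$. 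The positions in $x$, in $y$, and in a bounded neighborhood of each boundary are handled by a case analysis specific to the family.

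The principal obstacle, by a wide margin, is this uniform extremality verification. Extremality is a dense global constraint---a single escape position destroys it---so the factors $x,v,y$ must be designed to provide, simultaneously, blocking squares for every insertion class, including the three letter choices at each position. I expect these factors to be found by computer search and the verification to be a mechanical but nontrivial case check, reduced to finitely many values of $m$ by a synchronization lemma of the form ``every sufficiently short square that would appear in $P_m$ for some large $m$ already appears in $P_{m_0}$ for some small explicit $m_0$.'' Once that reduction is established, extremality of the entire family follows from checking a bounded list of $P_m$'s, and the theorem follows by combining the finite search for $n\leq 86$ with a handful of parameterized families covering all $n\geq 87$.
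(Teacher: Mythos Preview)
The ``only if'' direction and the thirteen sporadic lengths in $\mathcal{A}$ are handled by bounded computer search in both your proposal and the paper, so there is no issue there.

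Your construction for large $n$, however, cannot work as stated. You propose words of the form $P_m = x v^m y$ with $|v|=d\geq 1$ and $m$ ranging over large integers. But for any nonempty $v$ and any $m\geq 2$, the word $v^m$ contains the factor $vv$, which is a square. Hence $P_m$ is not square-free for $m\geq 2$, let alone extremal square-free. Your assertion that ``any putative square in $P_m$ must straddle one of the boundaries $x\mid v^m$ or $v^m\mid y$'' is simply false: the square $vv$ sits entirely inside $v^m$. No choice of $x$, $v$, $y$ can repair this, so the whole parameterized-family scheme collapses.

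The paper gets around this by never literally repeating a block. It works with a digraph $D$ whose twelve vertices are the permutations of $\Gamma$ and their mirror images, and a substitution $\delta$ sending each vertex to a set of nearly extremal square-free words obtained from two fixed templates (of lengths $41$ and $52$) by permuting the alphabet and/or reversing. A general criterion (Theorem~\ref{SquareFreeSubstitution}) guarantees that $\delta$ applied to any square-free walk in $D$ yields a square-free ternary word; fixed prefix and suffix blocks of length $49$ then turn it into an extremal square-free word (Corollary~\ref{ExtremalSquareFreeWalks}). Since the two template lengths $41$ and $52$ are coprime, Sylvester's lemma produces every length $n\geq 2138$, and the remaining $n\in\mathcal{A}$ with $n<2138$ are handled by computer. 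The key idea you are missing is that the long ``periodic'' middle must be built from copies of the basic block that have been permuted and reversed along a square-free pattern, so that no two consecutive copies are identical.
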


We also consider the analogous problem for \emph{circular words}.  The words $u$ and $v$ are \emph{conjugates} if there exist words $x$ and $y$ such that $u=xy$ and $v=yx$, i.e., if $u$ and $v$ are cyclic shifts of one another.  
Let $w\in\Sigma^*$.  The \emph{circular word} formed from $w$, denoted $\circular{w}$, is the set of all conjugates of $w$.  
For a set of words $L$, the word $u$ is a \emph{factor} of $L$ if $u$ is a factor of some word in $L$, and the set $L$ is \emph{square-free} if every word in $L$ is square-free.
In particular, the word $u$ is a factor of the circular word $\circular{w}$ if and only if $u$ is a factor of some conjugate of $w$, and the circular word $\circular{w}$ is square-free if and only if every conjugate of $w$ is square-free.  The following theorem was first proven by Currie~\cite{Currie2002}, and has since been reproven by several different methods~\cite{Shur2010,ClokieGabricShallit2019}.

\begin{theorem}[Currie~\cite{Currie2002}]\label{CurrieTernary}
For every integer $n\geq 18$, there is a square-free circular word of length $n$ over the alphabet \textnormal{$\{\tt{0},\tt{1},\tt{2}\}$}.
\end{theorem}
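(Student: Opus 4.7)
The plan is to combine explicit constructions for small lengths with a morphism-based construction for large ones. First I would fix a square-free ternary morphism $h$ (for instance Leech's uniform morphism, or an ad hoc uniform morphism of some modest length $k$) and verify two properties: (a) $h$ is square-free in the usual sense, so that $h(w)$ is square-free whenever $w$ is; and (b) if $\circular{w}$ is square-free and $|w|$ exceeds some small threshold, then $\circular{h(w)}$ is also square-free. I would then exhibit circularly square-free ternary words of each length in a finite ``seed'' interval $[18, L]$ by computer search, and push these seeds up to all $n \geq L+1$ by applying $h$ together with a short square-free buffer chosen so as to realize every residue class modulo $k$.

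The main obstacle is property (b). The linear square-free property of $h$ can be handled by Crochemore's criterion: it suffices to check $h$ on square-free words of length at most three. The circular version requires ruling out, in addition, squares that straddle the wrap-around point of $\circular{h(w)}$. Such a square corresponds to a factor of $h(w)h(w)$ spanning the seam, and provided $|w|$ is sufficiently large, a period argument bounds its length in terms of $\max_a |h(a)|$, reducing the problem to a finite combinatorial check. A cleaner alternative, in the spirit of \cite{ClokieGabricShallit2019}, would be to choose $h$ whose fixed point is a $k$-automatic sequence, encode ``$\circular{h^n(a)}$ is square-free'' in the first-order theory of that sequence, and dispatch the resulting formula using \emph{Walnut}.

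To fill the residue classes modulo $k$ not already realized as $|h(w)|$ for some seed $w$, I would either invoke a second morphism of coprime length, or concatenate two morphic images along a short interpolating block, verifying in each case that no square appears within the block or across the newly created seams. Keeping $L$ modest ensures that all finite verifications can be carried out by direct enumeration, while the morphism step handles the uniformly long tail $n \geq L+1$.
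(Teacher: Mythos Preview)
The paper does not give its own proof of this theorem: it is quoted as a known result of Currie, with later reproofs by Shur and by Clokie--Gabric--Shallit also cited, and is then used as a black box (in particular inside the proof of Lemma~\ref{EvenLengthWalks}). The only further remark is a parenthetical one at the end of Section~\ref{CircularSection}, to the effect that the paper's own machinery---the digraph $D$, the multi-valued substitutions, and Lemma~\ref{EvenLengthWalks} made self-contained by an inductive bootstrap in place of the appeal to Theorem~\ref{CurrieTernary}---could be adapted into yet another proof; this adaptation is not actually carried out. So there is no proof in the paper to compare your proposal against.

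Your outline is in the spirit of the cited reproofs, particularly the \emph{Walnut} alternative of~\cite{ClokieGabricShallit2019}. The soft spot is the residue-filling step. A uniform length-$k$ morphism applied to seeds of all lengths in $[18,L]$ produces only multiples of $k$, so the ``buffers'' or the ``second morphism of coprime length'' must do all the work of hitting the remaining residues---and in the \emph{circular} setting every such insertion creates two new seams plus the wrap-around to check. Neither fix quite works as stated: a second uniform morphism of length $k'$ coprime to $k$ still yields only lengths of the form $km$ or $k'm$, not general combinations $ak+bk'$; and ``concatenating two morphic images along an interpolating block'' is a linear operation, whereas here you must cut a circular word open, splice, and re-close. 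The cleaner device---and exactly the one the paper would use if it spelled out its promised adaptation---is a \emph{multi-valued} substitution whose images already come in two coprime lengths, so that Lemma~\ref{PostageStamp} fills all residues without any ad hoc surgery on the circular word.
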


By Theorem~\ref{CurrieTernary} and a finite search, the only lengths which do not admit square-free ternary circular words are $5$, $7$, $9$, $10$, $14$, and $17$.

Let $w$ be a word over a fixed alphabet $\Sigma$.  An \emph{extension} of the circular word $\circular{w}$ is a circular word of the form $\circular{w'aw''}$, where $a\in\Sigma$ is a letter and $w=w'w''$.  The circular word $\circular{w}$ is \emph{extremal square-free} if $\circular{w}$ is square-free, and every extension of $\circular{w}$ contains a square.  We prove the following theorem concerning the attainable lengths of extremal square-free ternary circular words.  This can be regarded as a strengthening of Theorem~\ref{CurrieTernary}.

\begin{theorem}\label{LongCircular}
Let $n$ be a nonnegative integer.  Then there is an extremal square-free circular word of length $n$ over the alphabet $\Gamma=\{\tt{a},\tt{b},\tt{c}\}$ if and only if $n$ is in the set
\begin{align*}
\mathcal{B}&=\{4,6,8,13,15,16,18,20,21,22,23,24,28,30,32,33,34,35,36\}\\
&\hspace{1cm}\cup\{m\colon\ m\geq 38\}.
\end{align*}
\end{theorem}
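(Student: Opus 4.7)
The plan is to split Theorem~\ref{LongCircular} into three sub-claims and attack them separately: (a) exhibit an explicit extremal square-free ternary circular word of each sporadic length in $\mathcal{B}$ with $n \leq 36$; (b) rule out each $n \leq 37$ with $n \notin \mathcal{B}$ (in particular $n = 37$) by showing no such word exists; and (c) construct an extremal square-free ternary circular word of every length $n \geq 38$. Parts (a) and (b) together form a single finite verification: for each $n \leq 37$, run a depth-first backtracking search over the square-free ternary circular words of length $n$, and for each such word check extremality by trying all $3n$ possible single-letter insertions and testing whether the result is square-free. The moderate exponential growth rate of square-free ternary words makes this routine; (a) yields the concrete witnesses to be listed, while (b) yields a certificate of non-existence.

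For the infinite part (c), I would use an inflation (pumping) strategy. First, locate extremal square-free circular words $\langle w_0\rangle,\ldots,\langle w_{k-1}\rangle$ of some $k$ consecutive lengths starting at or slightly above $38$, each containing a distinguished factor $\alpha$ that may be replaced by longer factors $\alpha^{(1)}, \alpha^{(2)}, \ldots$ of lengths $|\alpha|+k, |\alpha|+2k, \ldots$, subject to the requirements that every conjugate of the resulting longer circular word is still (i) square-free, and (ii) extremal. A finite supplementary list of seed words would handle any length $n \geq 38$ not directly reachable by the pump. Condition (i) can typically be enforced by taking the $\alpha^{(j)}$ to be images of short square-free words under a square-free morphism, so that no new squares appear in or across the inflated region. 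Condition (ii) then localizes: any putative new square-free extension must come from a single-letter insertion whose square-witnessing factor would straddle the boundary of $\alpha^{(j)}$, so extremality can be verified in a bounded neighborhood of that boundary by a finite case check independent of $j$.

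The hard part will be condition (ii) — preserving extremality under inflation. Square-freeness under pumping is a well-understood phenomenon, but extremality is a saturation property and is much more fragile: lengthening a word creates many new insertion points, and each one must force a square. The technical heart of the argument, as I envisage it, is choosing the base blocks $\alpha$ and their flanking letters so that every insertion into the inflated interior is immediately forced into a short square by a factor of bounded length wholly contained in the inflated region, reducing the extremality check to a finite list of boundary cases. The precise threshold $n = 38$ would then reflect two constraints simultaneously: the minimum length at which a pumpable base word can be found for every residue class modulo $k$, and a finite list of additional small lengths $\geq 38$ that are not reached by the pump and must be supplied as seeds.
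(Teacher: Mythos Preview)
Your treatment of the range $n\le 37$ agrees with the paper: both directions are handled by exhaustive backtracking. The divergence is entirely in part~(c), and here the approaches differ and your plan has a gap.

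The paper does not inflate a factor inside seed words. It assembles the whole circular word from \emph{nearly extremal} square-free blocks $Q,R,Q',R'$ (lengths $41,52,61,64$) and their images under permutations of $\Gamma$ and reversal. These twelve block types are the vertices of the digraph $D$; an arc from $x$ to $y$ records that a block of type $x$ may be followed by one of type $y$. Given a square-free circular walk in $D$ of even length $k$ (Lemma~\ref{EvenLengthWalks}; $D$ is bipartite by sign of permutation), the substitution $\delta'$ together with Theorem~\ref{SquareFreeSubstitution} produces a square-free circular ternary word of any length $41a+52b+61c+64d$ with $a+b+c+d=k$. Extremality then comes for free: an insertion interior to a block is blocked because that block is nearly extremal; an insertion at a seam $B_iB_{i+1}$ would have to be simultaneously the unique square-free right extension of $B_i$ and the unique square-free left extension of $B_{i+1}$, but since $B_iB_{i+1}$ is already square-free these are forced to be the first letter of $B_{i+1}$ and the last letter of $B_i$ respectively, and those two letters differ. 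This construction covers all $n\ge 470$, and the paper closes $38\le n<470$ by computer search.

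The gap in your inflation scheme is the claim that after replacing $\alpha$ by $\alpha^{(j)}$, the only insertions that could become square-free are those whose would-be witness straddles the boundary of $\alpha^{(j)}$. Consider an insertion at a position in the \emph{unchanged} part of the seed: in the original seed it was blocked by some square, but that square may have passed through $\alpha$, and there is no reason it survives once $\alpha$ is replaced by a longer $\alpha^{(j)}$. Extremality can therefore fail arbitrarily far from the inflated region, not only at its boundary. To repair this you would need $\alpha^{(j)}$ to agree with $\alpha$ on a prefix and suffix long enough that every such old witness is preserved---a nontrivial constraint interacting with your morphism requirement, and one you have not imposed. The paper's block decomposition sidesteps the issue entirely: every position carries a square witness confined to a single nearly extremal block, so there is no global dependency to maintain under lengthening.
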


The layout of the remainder of the article is as follows.  In Section~\ref{Prelims}, we present some preliminaries which are used to prove both of our main results.  In Section~\ref{LinearSection}, we prove Theorem~\ref{AllLongSquareFree}.  In Section~\ref{CircularSection}, we prove Theorem~\ref{LongCircular}.  We conclude with a discussion of some related problems.

\section{Preliminaries}\label{Prelims}

We will need the following well-known lemma, attributed to Sylvester.  See~\cite[Section~2.1]{FrobeniusProblemBook} for several different proofs.

\begin{lemma}\label{PostageStamp}
Let $p$ and $q$ be relatively prime positive integers.  For every integer $n\geq (p-1)(q-1)$, there exist nonnegative integers $a$ and $b$ such that $n=ap+bq$.
\end{lemma}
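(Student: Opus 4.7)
The plan is to reduce everything to a residue calculation modulo $p$. Since $\gcd(p,q)=1$, the multiples $0\cdot q,\,1\cdot q,\,2\cdot q,\ldots,(p-1)\cdot q$ form a complete system of residues modulo $p$. So given any integer $n$, there is a unique $b\in\{0,1,\ldots,p-1\}$ with $bq\equiv n\pmod{p}$. Setting $a=(n-bq)/p$ gives an integer $a$, and the representation $n=ap+bq$ automatically has $b\geq 0$; the only thing left to check is that the hypothesis $n\geq (p-1)(q-1)$ forces $a\geq 0$.

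That check is the one place the specific bound enters, so it is the step worth naming as the crux. I would compute
\[
ap = n-bq \geq (p-1)(q-1)-(p-1)q = -(p-1),
\]
using $b\leq p-1$, and conclude $a>-1$; since $a$ is an integer, $a\geq 0$. The proof then essentially writes itself: pick $b$, define $a$, apply this inequality.

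The only obstacle is really a bookkeeping one, namely verifying that $(p-1)(q-1)$ is exactly the right threshold; no case analysis, no auxiliary lemma, and no induction is needed. (One could alternatively give an induction on $n$, using a Bezout representation $1=xp+yq$ to shift the coefficients, but the residue argument above is cleaner and makes the tightness of the bound transparent.)
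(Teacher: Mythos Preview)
Your argument is correct. Choosing $b\in\{0,1,\ldots,p-1\}$ so that $bq\equiv n\pmod p$ and setting $a=(n-bq)/p$ gives an integer with
\[
ap=n-bq\ge (p-1)(q-1)-(p-1)q=-(p-1),
\]
hence $a\ge -(p-1)/p>-1$ and therefore $a\ge 0$. This is one of the standard proofs of Sylvester's result.

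There is nothing to compare against in the paper itself: the paper does not prove this lemma but simply attributes it to Sylvester and refers the reader to \cite{FrobeniusProblemBook} for several proofs. Your residue-class argument would be a perfectly acceptable self-contained replacement for that citation.
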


We will also need the following corollary of Lemma~\ref{PostageStamp}.

\begin{corollary}\label{EvenPostage}
Let $p$ and $q$ be relatively prime positive integers, exactly one of which is even.  For every integer $n\geq pq+(p-1)(q-1)$, there exist nonnegative integers $a$ and $b$ such that $n=ap+bq$, and the sum $a+b$ is even.
\end{corollary}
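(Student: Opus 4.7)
The plan is to apply Lemma~\ref{PostageStamp} to produce some representation $n = ap + bq$ with $a,b \geq 0$, and then, if necessary, shift to a neighbouring representation in order to correct the parity of $a+b$. The key mechanism is that for any representation $n = ap + bq$ we also have $n = (a \pm q)p + (b \mp p)q$, and the coefficient sum changes by $\mp(p-q)$. Since exactly one of $p,q$ is even, $p-q$ is odd, so each such shift toggles the parity of $a+b$.

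First, since $n \geq pq + (p-1)(q-1) \geq (p-1)(q-1)$, Lemma~\ref{PostageStamp} produces nonnegative integers $a,b$ with $n = ap + bq$. If $a+b$ is even we are done. Otherwise I would replace $(a,b)$ with either $(a-q,\,b+p)$ or $(a+q,\,b-p)$, obtaining a representation of $n$ whose coefficient sum has the opposite parity. The former requires $a \geq q$ and the latter requires $b \geq p$, so to close the proof I must show that at least one of these two inequalities must hold.

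This is the only quantitative step, and it is exactly what the hypothesis is tailored for. Suppose, toward contradiction, that $a \leq q-1$ and $b \leq p-1$. Then
\[
n \;=\; ap + bq \;\leq\; (q-1)p + (p-1)q \;=\; 2pq - p - q,
\]
which contradicts $n \geq pq + (p-1)(q-1) = 2pq - p - q + 1$. Hence one of the two shifts is legal, and the resulting pair $(a',b')$ of nonnegative integers satisfies $n = a'p + b'q$ with $a' + b'$ of the required (even) parity. I expect this matching of the bound $pq+(p-1)(q-1)$ to the single bad case $a < q$ and $b < p$ to be the main (and essentially the only) subtlety; everything else is an immediate consequence of the parity-flip observation.
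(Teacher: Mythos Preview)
Your proof is correct and uses essentially the same idea as the paper: apply Lemma~\ref{PostageStamp} and then shift the representation by $\pm(q,-p)$ to flip the parity of $a+b$, using that $p-q$ is odd. The paper organizes the bookkeeping slightly differently---it applies the lemma to $n-pq$ and then adds the reserved $pq$ to whichever coefficient yields an even sum---thereby guaranteeing nonnegativity directly rather than via your contradiction argument, but the substance is the same.
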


\begin{proof}
Suppose without loss of generality that $p$ is even and $q$ is odd.  Let $n\geq pq+(p-1)(q-1)$.  Then we have $n-pq\geq (p-1)(q-1)$.  By Lemma~\ref{PostageStamp}, there are nonnegative integers $\alpha$ and $\beta$ such that $n-pq=\alpha p+\beta q$.  If $\alpha+\beta$ is odd, then we can write $n=(\alpha+q)p+\beta q$, and $\alpha+q+\beta$ is even.  If $\alpha+\beta$ is even, then we can write $n=\alpha p+(\beta+p)q$, and $\alpha+\beta+p$ is even.
\end{proof}

Next, we prove a theorem which essentially extends a result of Grytczuk et al.~\cite[Theorem 2]{Grytczuk2019} from a morphism to a multi-valued substitution.  We note that many results similar to~\cite[Theorem 2]{Grytczuk2019} have appeared before in the literature (see~\cite[Section 4.2.5]{RampersadShallitChapter} for a summary).  However, most of these results give conditions on a morphism $f:\Sigma^*\rightarrow \Delta^*$ which guarantee that $f(w)$ is square-free for \emph{every} square-free word $w\in\Sigma^*$.  By contrast, the result of Grytczuk et al.\ gives conditions on a morphism $f:\Sigma^*\rightarrow\Delta^*$ and a square-free word $w\in \Sigma^*$ which guarantee that the word $f(w)$ is square-free, i.e., the conditions depend explicitly on the word $w$.  We note that arguments similar to those used in the proof of the following theorem have appeared before (see the proof of~\cite[Lemma 8]{KarhumakiShallit2004}, for example), but the entire proof is included for completeness.

\begin{theorem}\label{SquareFreeSubstitution}
Let $f:\Sigma^*\rightarrow 2^{\Delta^*}$ be a substitution, and let $u\in \Sigma^*$ be a square-free word.  Then the set $f(u)$ is square-free if all of the following conditions are satisfied:
\begin{enumerate}[label=\textnormal{(\Roman*)}]
    \item For every factor $v$ of $u$ of length at most $3$, the set $f(v)$ is square-free.\label{short}
    \item For every $a,b,c\in \Sigma$, and every $A\in f(a)$, $B\in f(b)$, and $C\in f(c)$:\label{forall}
    \begin{enumerate}[label=\textnormal{(\roman*)}]
            \item If $A$ is a factor of $B$, then $a=b$ and $A=B$.\label{infix}
        \item If $AB=pCs$ for some words $p,s\in\Delta^*$, then $p=\varepsilon$ or $s=\varepsilon$.\label{factor}
        \item If $A=A'A''$, $B=B'B''$, and $C=A'B''$, then $c=a$ or $c=b$.\label{final}
    \end{enumerate} 
\end{enumerate}
\end{theorem}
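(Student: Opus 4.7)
The plan is to argue by minimal counterexample. Suppose $u\in\Sigma^*$ is a shortest square-free word for which $f(u)$ is not square-free, and let $W=W_1\cdots W_n\in f(u)$ (with $n=|u|$ and $W_i\in f(u_i)$) contain a factor $XX$ with $X\neq\varepsilon$. Condition~\ref{short} forces $n\geq 4$. The minimality of $n$ ensures that $XX$ meets both $W_1$ and $W_n$: otherwise deleting $u_1$ or $u_n$ (and the corresponding prefix or suffix of $W$) would yield a strictly shorter counterexample. Hence there is a factorization $W_1=\alpha\beta$, $W_n=\gamma\delta$ with $\beta,\gamma$ nonempty and $XX=\beta W_2\cdots W_{n-1}\gamma$.

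The midpoint of $XX$ either lies on a boundary between $W_k$ and $W_{k+1}$ (case B), giving
\[ \beta W_2\cdots W_k \;=\; W_{k+1}\cdots W_{n-1}\gamma, \]
or strictly inside some $W_k$ (case A), giving an analogous equation with $W_k$ split as $W_k=W_k^{(1)}W_k^{(2)}$. In either case, I plan to synchronize the two block decompositions of $X$ by scanning the equation from left to right. Condition~\ref{factor} forbids any image $W_i$ from sitting as a strictly internal factor of a concatenation of two adjacent images on the opposite side, which forces block boundaries in one decomposition to meet block boundaries in the other. Condition~\ref{infix} then identifies each pair of aligned blocks as the same image of the same source letter. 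The residual configurations, in which an image equals a prefix of one image concatenated with a suffix of another, are precisely the $C=A'B''$ situation of condition~\ref{final}, which I will invoke to handle the partial endpoint pieces $\beta,\gamma$ and (in case A) the split of $W_k$. Together these yield a uniform shift $s\geq 1$ with $u_i=u_{i+s}$ over a range of indices long enough to exhibit a square factor in $u$, contradicting the square-freeness of $u$.

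I expect the main obstacle to be the careful bookkeeping at the two ends of the equation, where the partial images $\beta,\gamma$ (and $W_k^{(1)},W_k^{(2)}$ in case A) do not themselves correspond to full images of single letters. Condition~\ref{final} is designed precisely to control these partial-block alignments, and it must be combined carefully with~\ref{infix} and~\ref{factor} to ensure that the boundary pieces force $u_1$ and $u_n$ to agree with appropriate interior letters. A secondary verification is that the deduced shift $s$ is strictly positive and that the matched range of indices is long enough to yield a nontrivial square in $u$; this should follow from the fact that $\beta$ and $\gamma$ are nonempty, together with $n\geq 4$.
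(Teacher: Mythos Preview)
Your plan is correct and follows essentially the same route as the paper: take a minimal counterexample, locate the midpoint of the square inside some block $W_k$, use conditions~(II)(i)--(ii) to force the partial pieces $\beta$ and $W_k^{(2)}$ to have equal length and thereby synchronize the two block decompositions of $X$, and then apply~(II)(iii) to the reconstituted block $W_k=W_k^{(1)}W_k^{(2)}=\gamma\beta$ to conclude $u_k\in\{u_1,u_n\}$ and exhibit a square in $u$. The one detail the paper isolates that you do not flag is the case $n=4$, which it handles separately before running the synchronization argument under the assumption $n\geq 5$; you will likely find the same split convenient when you fill in the bookkeeping.
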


\begin{proof}
Suppose towards a contradiction that conditions \ref{short} and \ref{forall} are satisfied, but that some word in $f(u)$ contains a square.  Let $w=a_1a_2\cdots a_n$ be a minimal factor of $u$ such that some word $W=A_1A_2\cdots A_n$ contains a square, where $A_i\in f(a_i)$ for all $i\in\{1,\dots,n\}$.  Write $W=XYYZ$.  By the minimality of $w$, the word $X$ must be a proper prefix of $A_1$, and the word $Z$ must be a proper suffix of $A_n$.  By condition \ref{short}, we have $n\geq 4$. 

Suppose that $n=4$.  Then we can write
\[
W=A_1'A_1''A_2'A_2''A_3A_4'A_4'',
\]
where $A_i=A_i'A_i''$ for all $i\in\{1,2,4\}$, and $Y=A_1''A_2'=A_2''A_3A_4'$, or we can write
\[
W=A_1'A_1''A_2A_3'A_3''A_4'A_4'',
\]
where $A_i=A_i'A_i''$ for all $i\in\{1,3,4\}$, and $Y=A_1''A_2A_3'=A_3''A_4'$.  Assume the former. (The latter is handled similarly.) 
Then the word $A_3$ is a factor of $A_1''A_2'$, and hence of $A_1A_2$.  By Condition \ref{forall}\ref{factor}, we must have either $A_1'=A_2''=\varepsilon$, or $A_4'=\varepsilon$.  Since $A_4'=\varepsilon$ is impossible by the minimality of $w$, we must have $A_1'=A_2''=\varepsilon$.  Then $A_1A_2=A_3A_4'$, and hence either $A_1$ is a factor of $A_3$, or vice versa.  By Condition \ref{forall}\ref{infix}, we must have $a_1=a_3$ and $A_1=A_3$.  It follows that $A_2$ is a factor of $A_4'$, and hence $a_2=a_4$.  But then $w$ contains the square $(a_1a_2)^2$, an impossibility.

So we may assume that $n\geq 5$.  For some $j\in\{2,\dots,n-1\}$, we can write
\begin{align}\label{FormOfW}
W=A_1A_2\cdots A_n=A'_1A''_1A_2\cdots A_{j-1}A'_jA''_jA_{j+1}\cdots A_{n-1}A'_nA''_n,
\end{align}
where $A_i=A'_iA''_i$ for all $i\in\{1,j,n\}$, $X=A'_1$, $Z=A''_n$, and 
\begin{align*}
Y&=A''_1A_2\cdots A_{j-1}A'_j=A''_jA_{j+1}\cdots A_{n-1}A'_n.
\end{align*}
By the minimality of $w$, we must have $|A_1''|,|A_n'|>0$, and we may assume without loss of generality that $|A_j''|>0$.

Suppose that $|A_1''|>|A_j''|$.  If $j=2$, then $A_1''A_2'=A_2''A_3\cdots A_{n-1} A_n'$.  But since $n\geq 5$, we see that either $A_3$ must be a proper factor of $A_1''$, or $A_4$ must be a proper factor of $A_2'$.  By condition \ref{forall}\ref{infix}, this is impossible.  So we may assume that $j>2$.  By condition \ref{forall}\ref{infix}, we have that $A_2$ is not a factor of $A_{j+1}$, so $A_{j+1}$ must be a factor of $A_1''A_2$.  In particular, this implies that $j+1<n$.  Write $A_{j+2}=A'_{j+2}A''_{j+2}$ so that $A_1'A_2=A''_jA_{j+1}A'_{j+2}$.  By condition (II)(ii), we must have either $|A_j''|=0$ or $|A_{j+2}'|=0$.  Since $|A_j''|>0$ by assumption, we must have $|A_{j+2}'|=0$.  By condition \ref{forall}\ref{infix}, we must have $A_2=A_{j+1}$, and hence $A_1''=A_j''$.  This contradicts the assumption that $|A_1''|>|A_j''|$.

Now suppose that $|A_j''|>|A_1''|$.  If $j=n-1$, then $A_1''A_2\cdots A_{n-2}A_{n-1}'=A_{n-1}''A_n'$.  But since $n\geq 5$, we see that either $A_2$ must be a proper factor of $A_{n-1}''$, or $A_3$ must be a proper factor of $A_n'$.  By condition \ref{forall}\ref{infix}, this is impossible.  So we may assume that $j<n-1$.  By condition \ref{forall}\ref{infix}, we have that $A_{j+1}$ is not a factor of $A_2$, so $A_2$ must be a factor of $A_j''A_{j+1}$.  Write $A_3=A_3'A_3''$, where $A_1''A_2A_3'=A_j''A_{j+1}$.  By condition \ref{forall}\ref{factor}, we must have $|A_1''|=0$ or $|A_3'|=0$.  Since $|A_1''|>0$ by assumption, we must have $|A_3'|=0$.  By condition \ref{forall}\ref{infix}, we must have $A_2=A_{j+1}$, and hence $A_1''=A_j''$.  This contradicts the assumption that $|A_j''|>|A_1''|$.

So we may assume that $|A_1''|=|A_j''|$, and hence $A_1''=A_j''$.  Then either $A_2$ is a factor of $A_{j+1}$, or vice versa.  By condition~\ref{forall}\ref{infix}, we conclude that $a_2=a_{j+1}$ and $A_2=A_{j+1}$.  Applying this argument repeatedly, we find $a_2a_3\cdots a_{j-1}=a_{j+1}a_{j+2}\cdots a_{n-1}$, and $A_2A_3\cdots A_{j-1}=A_{j+1}A_{j+2}\cdots A_{n-1}$.  Finally, we see that $A_j'=A_n'$.  But then $A_j=A_j'A_j''=A_n'A_1''$.  By condition~\ref{forall}\ref{final}, we conclude that $a_j=a_1$ or $a_j=a_n$.  But then the word $u$ contains either the square $(a_1a_2\cdots a_{j-1})^2$ or the square $(a_2a_3\cdots a_j)^2$, respectively.
\end{proof}

\section{Extremal square-free words}\label{LinearSection}

In this section, we prove Theorem~\ref{AllLongSquareFree}.  We first summarize the method of Grytczuk et al.~\cite{Grytczuk2019} used to construct arbitrarily long extremal square-free ternary words.  The proof of Theorem~\ref{AllLongSquareFree} is obtained by a similar method.  We first introduce some notation and terminology.

Let $w$ be a word over a fixed alphabet $\Sigma$.  We say that $w$ is \emph{nearly extremal square-free} if $w$ is square-free, and there are only two square-free extensions of $w$; one left extension, and one right extension.  We say that $w$ is \emph{left (right) extremal square-free} if every square-free extension of $w$ is a right (left, respectively) extension.

Let $\mathbb{S}_{\Gamma}$ denote the symmetric group on the alphabet $\Gamma=\{\tt{a},\tt{b},\tt{c}\}$.  We represent the permutations of $\mathbb{S}_{\Gamma}$ using cycle notation, but for ease of notation, we omit the commas.  We denote the identity permutation by the empty cycle $()$.  We treat every member of $\mathbb{S}_\Gamma$ as both a permutation, and as a letter, depending on context.  For every permutation $\pi\in\mathbb{S}_\Gamma$, we let $\Tilde{\pi}$ be another letter, which we refer to as the \emph{mirror image} of $\pi$.  Let $\Tilde{\mathbb{S}}_\Gamma=\{\Tilde{\pi}\colon\ \pi\in\mathbb{S}_\Gamma\}$.  Since every permutation of $\Gamma$ can be written as either a single nontrivial cycle or the empty cycle, the parentheses serve as delimiters for letters in words over the alphabet $\mathbb{S}_\Gamma\cup \Tilde{\mathbb{S}}_\Gamma$.

Let $D$ be the digraph with vertex set $V(D)=\mathbb{S}_\Gamma\cup\Tilde{\mathbb{S}}_\Gamma$ that is shown in Figure~\ref{GrytczukDigraph}.  Let 
\[
N=\tt{abacbabcabacbcacbabcabacabcbabcabacbcabcb}.
\]
It is easily checked by computer that the word $N$ is nearly extremal square-free.  Now for every permutation $\pi\in \mathbb{S}_\Gamma$, let $N_{\pi}$ denote the word obtained by permuting the letters of $N$ by $\pi$, and let $N_{\Tilde{\pi}}$ denote the reversal of $N_{\pi}$.  Define the morphism $f:V(D)^*\rightarrow \Gamma^*$ by $f(x)=N_x$ for all $x\in V(D)$. 

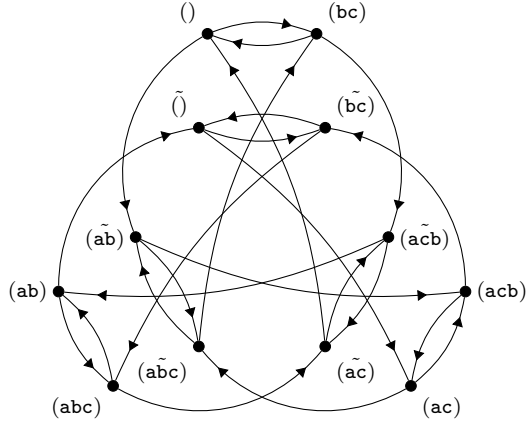
\begin{figure}
    \centering
    \begin{tikzpicture}[scale=1.4]
    \vertex (6) at (120:1.2) {};
    \node[above left] at (120:1.2) {\scriptsize $\Tilde{()}$};
    \vertex (7) at (180:1.2) {};
    \node[left] at (180:1.2) {\scriptsize $\Tilde{(\tt{ab})}$};
    \vertex (8) at (300:1.2) {};
    \node[below right] at (300:1.2) {\scriptsize $\Tilde{(\tt{ac})}$};
    \vertex (9) at (60:1.2) {};
    \node[above right] at (60:1.2) {\scriptsize $\Tilde{(\tt{bc})}$};
    \vertex (10) at (240:1.2) {};
    \node[below left] at (240:1.2) {\scriptsize $\Tilde{(\tt{abc})}$};
    \vertex (11) at (0:1.2) {};
    \node[right] at (0:1.2) {\scriptsize $\Tilde{(\tt{acb})}$};
    \vertex (0) at (105:2) {};
    \node[above left] at (105:2) {\scriptsize $()$};
    \vertex (1) at (195:2) {};
    \node[left] at (195:2) {\scriptsize $(\tt{ab})$};
    \vertex (2) at (315:2) {};
    \node[below right] at (315:2) {\scriptsize $(\tt{ac})$};
    \vertex (3) at (75:2) {};
    \node[above right] at (75:2) {\scriptsize $(\tt{bc})$};
    \vertex (4) at (225:2) {};
    \node[below left] at (225:2) {\scriptsize $(\tt{abc})$};
    \vertex (5) at (345:2) {};
    \node[right] at (345:2) {\scriptsize $(\tt{acb})$};
    \path
    (0) edge[->-=0.9,bend right=40] (7)
    (8) edge[->-=0.9,bend right=15] (0)
    (1) edge[->-=0.9,bend left=40] (6)
    (11) edge[->-=0.9,bend left=15] (1)
    (6) edge[->-=0.9,bend left=15] (2)
    (2) edge[->-=0.9,bend left=40] (10)
    (10) edge[->-=0.9,bend left=15] (3)
    (3) edge[->-=0.9,bend left=40] (11)
    (4) edge[->-=0.9,bend right=40] (8)
    (9) edge[->-=0.9,bend right=15] (4)
    (7) edge[->-=0.9,bend right=15] (5)
    (5) edge[->-=0.9,bend right=40] (9);
    \path
    (0) edge[->-=0.8, bend left=20] (3)
    (1) edge[->-=0.8, bend right=20] (4)
    (2) edge[->-=0.8, bend right=20] (5)
    (9) edge[->-=0.8, bend right=20] (6)
    (10) edge[->-=0.8, bend left=20] (7)
    (11) edge[->-=0.8, bend left=20] (8)
    (3) edge[->-=0.8, bend left=20] (0)
    (4) edge[->-=0.8, bend right=20] (1)
    (5) edge[->-=0.8, bend right=20] (2)
    (6) edge[->-=0.8, bend right=20] (9)
    (7) edge[->-=0.8, bend left=20] (10)
    (8) edge[->-=0.8, bend left=20] (11)
    ;
    \end{tikzpicture}
    \caption{The Digraph $D$.}
    \label{GrytczukDigraph}
\end{figure}

First of all, Grytczuk et al.\ show that if $w$ is a square-free walk in $D$ (where walks are treated as words over the vertex set), then the word $f(w)$ is square-free.  Next, they show that there are arbitrarily long square-free walks in $D$ that begin and end at the vertex $()$.  It follows that there are arbitrarily long nearly extremal square-free words over $\Gamma$ that have $N$ as both a prefix and a suffix.  Finally, Grytczuk et al.\ provide two short words that can be added to the beginning and the end of any such word to form an extremal square-free word.

In order to prove Theorem~\ref{AllLongSquareFree}, we replace the morphism $f$ with a multi-valued substitution $\delta$.  Using Lemma~\ref{PostageStamp}, we can then construct nearly extremal square-free words of every sufficiently large length.  Finally, we find words of a single fixed length that can be added to the beginning and end of every such nearly extremal square-free word to form an extremal square-free word.  This guarantees the existence of extremal square-free words of every sufficiently large length, and the smaller lengths are handled computationally.

Let
\begin{align*}
P&=\tt{abacbcabcbacabacbcabcbabcacbcabcbacabacbcabcbacbc},\\
Q&=\tt{abacbabcacbacabacbcacbacabcbabcabacbcabcb},\\
R&=\tt{abacabcacbacabcbabcacbacabacbcacbacabcbabcabacbcabcb},
\text{ and}\\
S&=\tt{acabacbabcacbacabcbacbcabacbabcacbacabcbabcacbaca}.
\end{align*}
We note that the words $P$ and $S$ have length $49$, the word $Q$ has length $41$, and the word $R$ has length $52$.  The word $R$ can be obtained from $Q$ by inserting the word \tt{abcacbacabc} after the $4$th letter.  By computer check, the words $Q$ and $R$ are both nearly extremal square-free, the words $PQ$ and $PR$ are left extremal square-free, and the words $QS$ and $RS$ are right extremal square-free.  Note that the word $Q$ cannot be obtained from the word $N$ (used by Grytczuk et al.) by permutation of the alphabet and/or reversal, i.e., for every $x\in V(D)$, we have $N_x\neq Q$.

For every letter $x\in V(D)$, let $p_x$ and $s_x$ be two new letters.  Let $\hat{D}$ be the graph obtained from $D$ by adding, for every $x\in V(D)$, the vertices $p_x$ and $s_x$, as well as arcs from $p_x$ to $x$ and from $x$ to $s_x$.  For every $x\in V(D)$, define the words $P_x$, $Q_x$, $R_x$, and $S_x$ analogously to $N_x$.  Now define the substitution $\delta:V(\hat{D})^*\rightarrow 2^{\Gamma^*}$ by 
\begin{itemize}
    \item $\delta(x)=\{Q_{x},R_{x}\}$ for all $x\in \mathbb{S}_\Gamma\cup\Tilde{\mathbb{S}}_\Gamma$;
    \item $\delta(p_x)=\{P_x\}$ and $\delta(s_x)=\{S_x\}$ for all $x\in \mathbb{S}_\Gamma$; and
    \item $\delta(p_x)=\{S_x\}$ and $\delta(s_x)=\{P_x\}$ for all $x\in \Tilde{\mathbb{S}}_\Gamma$.
\end{itemize}
Since $Q$ and $R$ are nearly extremal square-free, it follows immediately that $Q_x$ and $R_x$ are nearly extremal square-free for every $x\in V(D)$.  We also have the following fact.

\begin{lemma}\label{Endpoints}
For every $x\in V(D)$, every word in the set $\delta(p_xx)$ is left extremal square-free, and every word in the set $\delta(xs_x)$ is right extremal square-free.
\end{lemma}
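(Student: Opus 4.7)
The plan is to reduce the lemma immediately to the four verified facts that $PQ$, $PR$ are left extremal square-free and $QS$, $RS$ are right extremal square-free, by tracking how the substitution $\delta$ interacts with the two basic symmetries of the situation: permutation of the alphabet $\Gamma$, and word reversal.

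First I would record two easy observations. (1) If $w$ is left (resp.\ right) extremal square-free and $\pi\in\mathbb{S}_\Gamma$, then applying $\pi$ letterwise to $w$ yields another left (resp.\ right) extremal square-free word; indeed, both square-freeness and the set of possible interior/left/right extensions are preserved under renaming the alphabet. (2) If $w$ is right extremal square-free, then $\mathrm{rev}(w)$ is left extremal square-free, since reversal exchanges left extensions with right extensions and sends interior extensions to interior extensions (and preserves square-freeness). The symmetric statement for reversing a left extremal square-free word also holds.

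Next I would unpack $\delta$ in both cases. For $x\in\mathbb{S}_\Gamma$, we have $\delta(p_x x)=\{P_xQ_x,P_xR_x\}$, and each of these is just the image of $PQ$ or $PR$ under the permutation $x$; by (1) and the hypothesis on $PQ,PR$, they are left extremal square-free. For $x=\widetilde{\pi}\in\widetilde{\mathbb{S}}_\Gamma$, we have $\delta(p_x x)=\{S_xQ_x,S_xR_x\}$, and by definition $S_x=\mathrm{rev}(S_\pi)$ and $Q_x=\mathrm{rev}(Q_\pi)$, so $S_xQ_x=\mathrm{rev}(Q_\pi S_\pi)$ and similarly $S_xR_x=\mathrm{rev}(R_\pi S_\pi)$. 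Since $QS$ and $RS$ are right extremal square-free, (1) gives that $Q_\pi S_\pi$ and $R_\pi S_\pi$ are too, and then (2) turns their reversals into left extremal square-free words, as required. The argument for $\delta(xs_x)$ is identical with the roles of $P/S$ and of left/right swapped: for $x\in\mathbb{S}_\Gamma$ the set is $\{Q_xS_x,R_xS_x\}$, which are permuted copies of the right extremal square-free words $QS,RS$; for $x=\widetilde{\pi}$ the set is $\{Q_xP_x,R_xP_x\}=\{\mathrm{rev}(P_\pi Q_\pi),\mathrm{rev}(P_\pi R_\pi)\}$, which are reversals of left extremal square-free words and hence right extremal square-free.

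The only thing to be careful about is the bookkeeping around the mirror letters: one must consistently read $\delta$ off the case split in the definition, and remember that, for $x\in\widetilde{\mathbb{S}}_\Gamma$, the symbols $P_x,Q_x,R_x,S_x$ themselves already denote reversals of $P_\pi,Q_\pi,R_\pi,S_\pi$. Once the identifications $S_xQ_x=\mathrm{rev}(Q_\pi S_\pi)$ and $Q_xP_x=\mathrm{rev}(P_\pi Q_\pi)$ (and their $R$-analogues) are made explicit, the proof is a one-line appeal to the computer-verified properties of the four words $PQ,PR,QS,RS$, so no real obstacle remains.
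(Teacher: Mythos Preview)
Your proof is correct and follows essentially the same approach as the paper's: both argue by the case split $x\in\mathbb{S}_\Gamma$ versus $x\in\widetilde{\mathbb{S}}_\Gamma$, reducing via the permutation and reversal symmetries to the four computer-verified facts about $PQ$, $PR$, $QS$, $RS$. You are a bit more explicit in stating observations (1) and (2) and in spelling out the $\delta(xs_x)$ case that the paper dismisses as ``similar,'' but there is no substantive difference.
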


\begin{proof}
Let $x\in V(D)$.  We show that every word in the set $\delta(p_xx)$ is left extremal square-free.  The proof that every word in the set $\delta(xs_x)$ is right extremal square-free is similar.  If $x\in \mathbb{S}_\Gamma$, then $\delta(p_xx)=\{P_xQ_x,P_xR_x\}$.   Since the words $PQ$ and $PR$ are left extremal square-free, so are $P_xQ_x$ and $P_xR_x$.  On the other hand, if $x\in \Tilde{\mathbb{S}}_\Gamma$, then write $x=\Tilde{\pi}$. Then $\delta(p_xx)=\{S_{\Tilde{\pi}}Q_{\Tilde{\pi}},S_{\Tilde{\pi}}R_{\Tilde{\pi}}\}$.  Note that $S_{\Tilde{\pi}}Q_{\Tilde{\pi}}$ is the reversal of the word $Q_{\pi}S_{\pi}$.  Since $QS$ is right extremal square-free, so is the word $Q_{\pi}S_{\pi}$.  It follows that the word $S_{\Tilde{\pi}}Q_{\Tilde{\pi}}$ is left extremal square-free.  The proof that $S_{\Tilde{\pi}}R_{\Tilde{\pi}}$ is left extremal square-free is analogous.
\end{proof}

Using Theorem~\ref{SquareFreeSubstitution}, the next lemma can be verified by a computer check.  (We check condition (I) for all square-free walks of length $3$ in $\hat{D}$.)

\begin{lemma}\label{SquareFreeHatD}
If $w$ is a square-free walk in the digraph $\hat{D}$, then the set $\delta(w)$ is square-free.
\end{lemma}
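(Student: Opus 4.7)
The plan is to apply Theorem~\ref{SquareFreeSubstitution} to the substitution $\delta \colon V(\hat D)^* \to 2^{\Gamma^*}$, taking $u = w$. Since any square-free walk in $\hat D$ is in particular a square-free word over the alphabet $V(\hat D)$, it suffices to verify conditions \ref{short} and \ref{forall} of the theorem for $\delta$. Both are finite checks, so the proof ultimately reduces to a computer verification; the task is to organize it.

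For condition \ref{short}, every factor of $w$ of length at most $3$ is itself a square-free walk of length at most $3$ in $\hat D$. Because $\hat D$ has only $24$ vertices, there are only finitely many such walks, and for each one $v$ the set $\delta(v)$ contains at most $2^{|v|} \le 8$ words in $\Gamma^*$, each of length at most $3 \cdot 52 = 156$. I would enumerate all square-free walks of length at most $3$ in $\hat D$ and, for each such $v$, test square-freeness of every word in $\delta(v)$ by direct substring examination.

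For condition \ref{forall}, I need to quantify over triples $(A, B, C) \in \delta(a) \times \delta(b) \times \delta(c)$ with $a,b,c \in V(\hat D)$. Again the alphabet is small and the images of $\delta$ are short, so the verification is finite. For \ref{infix}, I would enumerate all pairs $(b, B)$ with $B \in \delta(b)$ and all factors $A$ of $B$, then check that each such $A$ belongs to $\delta(a)$ only for $a = b$ and $A = B$; for \ref{factor}, I would enumerate the at most quadratically many pairs $(A,B)$, all their internal factorizations $AB = pCs$ with $p,s$ both nonempty, and confirm that no resulting $C$ lies in any $\delta(c)$; and for \ref{final}, I would run through all factorizations $A = A'A''$ and $B = B'B''$ and verify that $A'B'' \in \delta(c)$ forces $c \in \{a,b\}$.

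The main obstacle is careful bookkeeping rather than any mathematical subtlety. Two sources of potential error deserve attention: the substitution is multi-valued, so condition \ref{forall} must genuinely range over both elements of $\delta(x) = \{Q_x, R_x\}$ for each $x \in \mathbb{S}_\Gamma \cup \tilde{\mathbb{S}}_\Gamma$; and the auxiliary vertices $p_x, s_x$ receive different images depending on whether $x \in \mathbb{S}_\Gamma$ or $x \in \tilde{\mathbb{S}}_\Gamma$, so the enumeration must cover all $24$ vertex types and the $4$ word families $P, Q, R, S$ (together with the $12$ permutations of $\Gamma$ and the reversal operation used to produce $P_x, Q_x, R_x, S_x$). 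Once these enumerations are implemented correctly, the check is immediate, and then Theorem~\ref{SquareFreeSubstitution} delivers the conclusion that $\delta(w)$ is square-free.
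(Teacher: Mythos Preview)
Your proposal is correct and takes essentially the same approach as the paper: apply Theorem~\ref{SquareFreeSubstitution} and verify conditions \ref{short} and \ref{forall} by a finite computer check, restricting condition \ref{short} to square-free walks of length at most $3$ in $\hat D$. One minor slip: $\hat D$ has $36$ vertices (the $12$ of $D$ together with $p_x$ and $s_x$ for each $x\in V(D)$), not $24$, but this does not affect the argument.
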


Together, Lemma~\ref{Endpoints} and Lemma~\ref{SquareFreeHatD} yield the following corollary.

\begin{corollary}\label{ExtremalSquareFreeWalks}
Let $w$ be a square-free walk in $D$ of length at least $2$, and write $w=xw'y$, where $x,y\in V(D)$.  Then every word in the set $\delta(p_xws_y)$ is extremal square-free.
\end{corollary}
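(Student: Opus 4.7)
The plan is to combine Lemmas~\ref{Endpoints} and~\ref{SquareFreeHatD} with a case analysis on where the inserted letter falls within the block decomposition of $W$, finishing with a finite computer check for the boundary cases. First, note that $p_xws_y$ is a square-free walk in $\hat{D}$: the arcs $p_x\to x$ and $y\to s_y$ are present in $\hat{D}$ by construction, the middle portion $w$ is a square-free walk in $D$ by hypothesis, and the letters $p_x$ and $s_y$ do not appear in $w$, so the concatenation remains square-free as a word over $V(\hat{D})$. Applying Lemma~\ref{SquareFreeHatD}, every word in $\delta(p_xws_y)$ is square-free, and it remains only to prove that no such word admits a square-free extension.

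Fix $W\in\delta(p_xws_y)$ and write $p_xws_y=v_0v_1\cdots v_{m+1}$ and $W=B_0B_1\cdots B_{m+1}$, with $v_0=p_x$, $v_{m+1}=s_y$, $v_1=x$, $v_m=y$, $m\ge 2$, and $B_i\in\delta(v_i)$. Set $T_k=|B_0B_1\cdots B_{k-1}|$. For an arbitrary extension $W'$ of $W$ obtained by inserting a letter $a$ at position $i$, I will show $W'$ contains a square by case analysis on $i$. If $0\le i<T_2$, the prefix of $W'$ of length $T_2+1$ is $B_0B_1$ with $a$ inserted at a non-rightmost position, hence a left or interior extension of $B_0B_1$; by Lemma~\ref{Endpoints}, $B_0B_1$ is left extremal square-free, so this prefix contains a square. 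The symmetric range $T_m<i\le|W|$ is handled by right extremality of $B_mB_{m+1}$. If $T_k<i<T_{k+1}$ for some $2\le k\le m-1$, then $B_k$ becomes an interior extension of itself in $W'$, and since $B_k\in\delta(v_k)$ is nearly extremal square-free, the modified block contains a square.

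The only remaining positions are the boundaries $i=T_k$ for $2\le k\le m$, where the inserted letter falls exactly between two adjacent blocks. For each such $k$, I identify a short factor of $W'$ around the inserted letter---for instance $B_0B_1aB_2$ when $k=2$, $B_{k-1}aB_k$ when $3\le k\le m-1$, and $B_{m-1}aB_mB_{m+1}$ when $k=m$---and verify by a finite computer check that this factor contains a square in every possible configuration. The configurations range over the arcs of $\hat{D}$ involved, the at most two choices of each $B_i\in\delta(v_i)$, and the three choices of $a\in\Gamma$, so this is a small enumeration.

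The main obstacle is this boundary analysis. When $a$ is simultaneously a valid square-free right extension of the left block and a valid square-free left extension of the right block, the extremality of the individual blocks does not immediately force a square, and one must use the specific structure of the words $P$, $Q$, $R$, $S$ together with the arcs of $D$ to rule out every such insertion. This step is necessarily computational, and it is precisely where the careful design of $\delta$ and $\hat{D}$ pays off.
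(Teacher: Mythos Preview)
Your argument tracks the paper's proof, but the boundary computer check you call ``the main obstacle'' is unnecessary, and the claim that this step is ``necessarily computational'' is mistaken. For each boundary position $i=T_k$ with $2\le k\le m$, both neighbouring blocks $B_{k-1}$ and $B_k$ lie in $\delta(V(D))=\{Q_z,R_z:z\in V(D)\}$ and are therefore nearly extremal square-free. Since $B_{k-1}B_k$ is a factor of the square-free word $W$, the \emph{unique} letter giving a square-free right extension of $B_{k-1}$ must be the first letter $b$ of $B_k$. Hence if the inserted letter $a$ makes $B_{k-1}a$ square-free, then $a=b$ and $aB_k$ begins with the square $bb$; otherwise $B_{k-1}a$ already contains a square. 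The scenario you worry about---$a$ simultaneously a valid square-free right extension of $B_{k-1}$ and a valid square-free left extension of $B_k$---therefore cannot occur. The paper packages exactly this observation into the single sentence ``it follows easily that every word in $\delta(w)$ is nearly extremal square-free,'' and then finishes by appealing to Lemma~\ref{Endpoints} at the two ends, with no further computation.
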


\begin{proof}
Since $w$ is a square-free walk in $D$ (and not $\hat{D}$), it contains neither $p_x$ nor $s_y$, and hence the walk $p_xws_y$ is square-free.  By Lemma~\ref{SquareFreeHatD}, the set $\delta(p_xws_y)$ is square-free.  Note that for every $z\in V(D)$, both words in the set $\delta(z)$, namely $Q_z$ and $R_z$, are nearly extremal square-free.  It follows easily that every word in the set $\delta(w)$ is nearly extremal square-free.  By Lemma~\ref{Endpoints}, every word in $\delta(p_xx)$ is left extremal square-free, and every word in $\delta(ys_y)$ is right extremal square-free.  It follows that every word in the set $\delta(p_xws_y)$ is extremal square-free.
\end{proof}

We are now ready to prove Theorem~\ref{AllLongSquareFree}.

\begin{proof}[Proof of Theorem~\ref{AllLongSquareFree}]
If $n\not\in\mathcal{A}$, then we used a standard backtracking algorithm to show that there is no extremal square-free ternary word of length $n$.

Suppose otherwise that $n\in\mathcal{A}$.  Suppose first that $n\geq 2138$.  Then $n-98\geq 2040$, and by Lemma~\ref{PostageStamp}, we can write $n-98=41a+52b$ for some nonnegative integers $a$ and $b$.  Let $w$ be a square-free walk in $D$ of length $a+b$, and write $w=xw'y$ for some $x,y\in V(D)$.  Since $\delta(z)$ contains a word of length $41$ and a word of length $52$ for every $z\in V(D)$, it is evident that there is a word $W$ of length $n-98$ in $\delta(w)$.  Now the unique word $W'$ in $\delta(p_x)W\delta(s_y)$ has length $n$, and by Corollary~\ref{ExtremalSquareFreeWalks}, the word $W'$ is extremal square-free.

So we may assume that $n<2138$.  In this case, we found an extremal square-free circular word of length $n$ by computer search.
\end{proof}

\section{Extremal square-free circular words}\label{CircularSection}

In this section, we prove Theorem~\ref{LongCircular}.  A \emph{circumnavigation} of a circular word $\circular{w}$ is a linear word of the form $ava$, where $a$ is a letter, and $av$ is a conjugate of $w$.  We begin with an elementary lemma about the circumnavigations of square-free circular words.

\begin{lemma}\label{CircumnavigationsSquareFree}
Let $w\in \Sigma^*$ be a word of length at least $2$.  If $\circular{w}$ is square-free, then every circumnavigation of $\circular{w}$ is square-free.
\end{lemma}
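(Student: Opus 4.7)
The plan is to argue by contradiction: suppose some circumnavigation $ava$ of $\circular{w}$ contains a square $xx$, and then derive a square in one of the conjugates $av$ or $va$ of $w$, contradicting the hypothesis that $\circular{w}$ is square-free. The first thing I would observe is that $va$ is a cyclic shift of $av$, so $va$ is itself a conjugate of $w$; hence both $av$ and $va$ are square-free.

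Since $av$ is a prefix of $ava$ of length $|w|$ and $va$ is a suffix of $ava$ of length $|w|$, while $|ava|=|w|+1$, any proper factor of $ava$ is contained in $av$ or in $va$. The square $xx$ must therefore cover every position of $ava$, so $xx=ava$. From $2|x|=|w|+1$ we see that $|w|$ is odd, and combined with the hypothesis $|w|\geq 2$ this yields $|w|\geq 3$, and in particular $|x|\geq 2$.

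Now I would inspect the splice between the two copies of $x$. Both copies begin with $a$ and end with $a$, because $ava$ does; so positions $|x|$ and $|x|+1$ of $ava$—the last letter of the first copy of $x$ and the first letter of the second—are both equal to $a$, producing the factor $aa$ of $ava$ at those positions. Since $|x|+1=(|w|+3)/2\leq|w|$ whenever $|w|\geq 3$, these two consecutive positions lie inside the prefix $av$. Therefore $av$ contains the square $aa$, contradicting its square-freeness. The proof is elementary once one notices that the conjugates $av$ and $va$ overlap in all but one position and so together force any new square in $ava$ to span the entire word; that positional observation is the only real obstacle.
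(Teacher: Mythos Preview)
Your proof is correct and follows essentially the same approach as the paper's: both argue by contradiction, observe that $av$ and $va$ are square-free conjugates of $w$, deduce that the square $xx$ must equal the whole word $ava$, and then extract the factor $aa$ at the junction of the two copies of $x$ to reach a contradiction. Your version is slightly more explicit about why the square must span all of $ava$ and why the resulting $aa$ lies inside a conjugate of $w$, but the underlying argument is identical.
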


\begin{proof}
Let $u$ be a circumnavigation of $\circular{w}$.  Since $|w|\geq 2$, we have $|u|\geq 3$.  Suppose towards a contradiction that $u$ contains a square.  Write $u=ava$, where $a$ is a letter, and $av$ is a conjugate of $w$.  Then $va$ is also a conjugate of $w$.  Since $\circular{w}$ is square-free, neither conjugate $av$ nor $va$ contains a square.  Hence, it must be the case that $u=xx$ for some word $x$.  Evidently, the word $x$ begins and ends in $a$, and has length at least $2$, since $|u|\geq 3$.  Write $x=ax'a$ for some word $x'$.  Then $u=ax'aax'a$, and we conclude that $\circular{w}$ contains the square $aa$, a contradiction.
\end{proof}

With $D$ the digraph shown in Figure~\ref{GrytczukDigraph}, define the substitution $h\colon \{\tt{0},\tt{1},\tt{2}\}^*\rightarrow 2^{V(D)^*}$ by
\begin{align*}
    \tt{0}&\rightarrow \left\{()\Tilde{(\tt{ab})}(\tt{acb})(\tt{ac})\Tilde{(\tt{abc})}(\tt{bc})\right\}\\
    \tt{1}&\rightarrow \left\{()\Tilde{(\tt{ab})}\Tilde{(\tt{abc})}(\tt{bc})\Tilde{(\tt{acb})}\Tilde{(\tt{ac})}\right\}\\
    \tt{2}&\rightarrow \left\{()\Tilde{(\tt{ab})}(\tt{acb})\Tilde{(\tt{bc})}(\tt{abc})\Tilde{(\tt{ac})},()\Tilde{(\tt{ab})}(\tt{acb})\Tilde{(\tt{bc})}(\tt{abc})(\tt{ab})(\tt{abc})\Tilde{(\tt{ac})}\right\}.
\end{align*}
Let $w=w_0w_1\cdots w_{n-1}$ be a word over the alphabet $V(D)$, where the $w_i$'s are letters.  We say that the circular word $\circular{w}$ is \emph{walkable} in $D$ if every conjugate of $w$ is a valid walk in $D$.  Equivalently, the circular word $\circular{w}$ is walkable in $D$ if there is an arc from $w_i$ to $w_{i+1}$ for every $i\in\{0,1,\hdots,n-1\}$, with indices taken modulo $n$.

\begin{lemma}\label{CircularSquareFreeMorphism}
Let $\circular{w}$ be a square-free circular word of length at least $2$ over the alphabet \textnormal{$\{\tt{0},\tt{1},\tt{2}\}$}, and let $W\in h(w)$.   Then the circular word $\circular{W}$ is square-free and walkable in the digraph $D$.
\end{lemma}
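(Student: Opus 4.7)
The plan splits the claim into walkability (by inspection of $D$) and square-freeness (by reduction to Theorem~\ref{SquareFreeSubstitution} through Lemma~\ref{CircumnavigationsSquareFree}).

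For walkability, I would first verify that each of the four words in the range of $h$ is itself a walk in $D$ by checking consecutive vertex pairs against Figure~\ref{GrytczukDigraph}. Since every such image begins with the vertex $()$, it then suffices to confirm that for each possible ``last letter'' of an image, there is an arc to $()$ in $D$. The only last letters arising are $(\tt{bc})$ (for $h(\tt{0})$) and $\Tilde{(\tt{ac})}$ (for $h(\tt{1})$ and both elements of $h(\tt{2})$), and both arcs $(\tt{bc}) \to ()$ and $\Tilde{(\tt{ac})} \to ()$ are present. Consequently, every conjugate of $W$ is a valid walk, since its consecutive-letter transitions are either internal to some block $h(w_i)$ or are precisely the boundary transitions just checked.

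For square-freeness, the plan is to apply Theorem~\ref{SquareFreeSubstitution} directly to $h$, viewed as a substitution $\{\tt{0},\tt{1},\tt{2}\}^* \to 2^{V(D)^*}$. First I would verify conditions~(I) and~(II) of that theorem for $h$ by a finite computer check: condition~(I) ranges over the finitely many square-free ternary words of length at most $3$, and condition~(II) over finitely many triples of images (each $|h(i)| \leq 2$). Granted these, Theorem~\ref{SquareFreeSubstitution} yields that $h(u)$ is square-free for every square-free $u \in \{\tt{0},\tt{1},\tt{2}\}^*$. Now suppose, for contradiction, that some conjugate $W'$ of $W$ contains a square. The rotation producing $W'$ cuts some block $h(w_i) = XY$, so
\[
W' = Y \cdot h(w_{i+1}) \cdots h(w_{i-1}) \cdot X,
\]
and hence $W'$ is a factor of the word obtained by substituting the specific images used in $W$ (in particular, the same image of $h(w_i)$ on both ends) into the circumnavigation $u_i = w_i w_{i+1} \cdots w_{i-1} w_i$ of $\circular{w}$. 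By Lemma~\ref{CircumnavigationsSquareFree}, $u_i$ is square-free; by Theorem~\ref{SquareFreeSubstitution}, every element of $h(u_i)$ is square-free; hence $W'$ is square-free, a contradiction.

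The main obstacle is the verification of condition~(II) of Theorem~\ref{SquareFreeSubstitution} for $h$. Condition~(II)(i), in particular, demands that no image of $h$ be a proper factor of another; since the two images of $\tt{2}$ share a prefix of length $5$, this is not immediate by inspection. The check is nonetheless a routine finite case analysis, which I would delegate to a computer rather than carry out by hand.
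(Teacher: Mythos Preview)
Your proposal is correct and follows essentially the same approach as the paper: you reduce square-freeness of $\circular{W}$ to square-freeness of $h(u)$ for a circumnavigation $u$ (via Lemma~\ref{CircumnavigationsSquareFree}) and then invoke Theorem~\ref{SquareFreeSubstitution} after a finite computer check of its hypotheses, and you handle walkability by noting that every image begins at $()$ and checking the relevant closing arcs. The paper's proof is organized identically, only slightly more terse about the computer verification.
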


\begin{proof}
We first show that $\circular{W}$ is square-free. Let $U$ be a conjugate of $W$.  Then $U$ is a factor of the set $h(u)$ for some circumnavigation $u$ of $w$.  By Lemma~\ref{CircumnavigationsSquareFree}, the circumnavigation $u$ is square-free.  Using Theorem~\ref{SquareFreeSubstitution}, we verify by computer that $h(u)$ is square-free.  (We check condition (I) for every square-free word $v\in\{\tt{0},\tt{1},\tt{2}\}^*$ of length $3$.) We conclude that the word $U$ is square-free.  Since $U$ was an arbitrary conjugate of $W$, we conclude that the circular word $\circular{W}$ is square-free.

It remains to show that $\circular{W}$ is walkable in $D$.  Note that every word $A\in h(\{0,1,2\})$ begins in the identity permutation $()$.  So it suffices to check that for all $A\in h(\{0,1,2\})$, the word $A()$ is walkable in $D$, and this is easily done by inspection.
\end{proof}

\begin{lemma}\label{EvenLengthWalks}
For every even positive integer $n$, there is a square-free circular word of length $n$ that is walkable in the digraph $D$.
\end{lemma}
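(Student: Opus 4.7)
The plan is to combine an application of the substitution $h$ (via Lemma~\ref{CircularSquareFreeMorphism}) with explicit closed walks in $D$ for a finite list of small even lengths. Since $|h(\tt{0})|=|h(\tt{1})|=6$ and $h(\tt{2})$ contains one word of length $6$ and one of length $8$, each square-free ternary circular word $\circular{w}$ of length $m$ with $t$ occurrences of $\tt{2}$ yields, by Lemma~\ref{CircularSquareFreeMorphism}, walkable square-free circular words in $D$ of every length in $\{6m+2j:0\leq j\leq t\}$: simply choose the length-$8$ image for $j$ of the $\tt{2}$'s and the length-$6$ image for the remaining $t-j$.

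To maximise $t$ for each $m$, I would invoke a pigeonhole observation: in any square-free ternary circular word of length $m$, some letter must occur at least $\lceil m/3\rceil$ times, and permuting the three letters (which preserves square-freeness) lets us take that letter to be $\tt{2}$. Combining this with Theorem~\ref{CurrieTernary}, which provides square-free ternary circular words of length $m$ for all $m$ outside the exceptional set $\{5,7,9,10,14,17\}$, the reachable ranges $\{6m,6m+2,\ldots,6m+2\lceil m/3\rceil\}$ can be unioned over admissible $m$. A short calculation shows that for $m\geq 11$ the range from $m$ meets or covers the start $6m'$ of the range from the next admissible $m'$, so every even $n\geq 66$ is captured.

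This reduces the problem to a finite list of small even $n$ below $66$, namely those not already expressible as $6m+2j$ for an admissible $m$ and $j\leq\lceil m/3\rceil$; a direct enumeration identifies these as $\{2,4,10,16,22,30,32,34,42,44,46,56,58,60,62,64\}$. For each such $n$ I would exhibit an explicit closed walk in $D$ whose circular word is square-free. The graph $D$ decomposes into two disjoint $6$-cycles joined by six $2$-cycles (one incident to every vertex), so short closed walks of the required lengths can be constructed by combining partial traversals of a $6$-cycle with $2$-cycle ``detours,'' with square-freeness verified either by inspection or by a brief computer check. The main obstacle will be completing this finite case analysis cleanly and ensuring no hidden squares are introduced by the detours; once that is in place, the large-$n$ argument follows from the pigeonhole/alphabet-permutation observation with essentially no extra work.
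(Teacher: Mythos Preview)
Your approach is essentially the paper's: feed square-free ternary circular words from Theorem~\ref{CurrieTernary} through the substitution $h$ via Lemma~\ref{CircularSquareFreeMorphism}, and handle a finite residue of small even lengths separately. The paper uses the cruder observation $|u|_{\tt{2}}\geq 2$ (forced because every square-free word over $\{\tt{0},\tt{1}\}$ has length at most~$3$), takes $m\geq 18$, writes $n=6m+r$ with $r\in\{0,2,4\}$, and dispatches all even $n<108$ by computer search. Your pigeonhole-and-relabel trick lowers the threshold to $66$ and proposes explicit closed walks in $D$ in place of the search, but this is a refinement of the same argument rather than a different route.

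One bookkeeping slip: your finite list omits $n=6$ and $n=8$. Lemma~\ref{CircularSquareFreeMorphism} requires $|w|\geq 2$, so $m=1$ is not available to you, and no admissible $m\geq 2$ reaches $6$ or $8$ through $h$. Both are easy to supply directly (a $6$-cycle of $D$ gives $n=6$; the length-$8$ word in $h(\tt{2})$, read as a closed walk, gives $n=8$), so the gap is cosmetic, but the list as stated is incomplete.
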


\begin{proof}
Let $n$ be an even positive integer.  First suppose that $n\geq 6\cdot 18=108$.  Then we may write $n=6m+r$ for some $m\geq 18$ and $r\in\{0,2,4\}$.  By Theorem~\ref{CurrieTernary}, there is a circular square-free word $\circular{u}$ of length $m$ over the alphabet $\{\tt{0},\tt{1},\tt{2}\}$.  Since every square-free word on $\{\tt{0},\tt{1}\}^*$ has length at most $3$, we must have $|u|_{\tt{2}}\geq 2$.  Note that for every $a\in\{\tt{0},\tt{1},\tt{2}\}$, there is a word in $h(a)$ of length $6$.  Further, the set $h(\tt{2})$ contains a word of length $8$.  Thus, there is a word $U\in h(u)$ of length $n=6m+r$, obtained by using the word of length $8$ in $h(\tt{2})$ exactly $0$, $1$, or $2$ times (for $r$ equal to $0$, $2$, or $4$, respectively).  By Lemma~\ref{CircularSquareFreeMorphism}, the circular word $\circular{U}$ is square-free, and is walkable in the digraph $D$.

Now we may assume that $n<108$, and in this case we verify the statement by means of a computer search.
\end{proof}

Let 
\begin{align*}
Q'&=\footnotesize{\tt{abacabcacbacabcbabcacbacabacbcacbacabcbacbcabacbabcabacbcabcb}, }\text{ and}\\
R'&=\footnotesize{\tt{abacabcacbacabcbabcacbacabacbcacbacabcbabcacbcabacbabcabacbcabcb}.
}\end{align*}
Note that $Q'$ has length $61$, and $R'$ has length $64$.  The word $Q'$ can be obtained from the word $R$ by adding the factor \tt{cbcabacba} after the $40$th letter, and the word $R'$ can be obtained from the word $Q'$ by adding the factor \tt{bca} after the $40$th letter.
For every $x\in \mathbb{S}_\Gamma\cup\Tilde{\mathbb{S}}_\Gamma$, define $Q'_x$ and $R'_x$ analogously to $N_x$.
Define the substitution $\delta':V(D)^*\rightarrow 2^{\Gamma^*}$ by $\delta'(x)=\{Q_{x},R_{x},Q'_{x},R'_{x}\}$ for all $x\in \mathbb{S}_\Gamma\cup\Tilde{\mathbb{S}}_\Gamma$.  So for every $x\in V(D)$, the set $\delta'(x)$ contains a word of length $m$ for all $m\in\{41,52,61,64\}$.  The proof of the following lemma is analogous to the first paragraph of the proof of Lemma~\ref{CircularSquareFreeMorphism}.

\begin{lemma}\label{deltaCircular}
Let $\circular{w}$ be a square-free circular word that is walkable in the digraph $D$, and let $W\in \delta'(w)$.  Then the circular word $\circular{W}$ is square-free. 
\end{lemma}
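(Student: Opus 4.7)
The plan is to mirror the first paragraph of the proof of Lemma~\ref{CircularSquareFreeMorphism}, using the substitution $\delta'$ in place of $h$. Let $U$ be an arbitrary conjugate of $W$. Writing $W=W_0W_1\cdots W_{n-1}$ with $W_i\in\delta'(w_i)$ and $w=w_0w_1\cdots w_{n-1}$, I would pick an index $i$ (depending on where the cyclic cut producing $U$ falls) and form the circumnavigation $u=w_iw_{i+1}\cdots w_{i-1}w_i$ of $\circular{w}$ (indices taken modulo $n$) so that $U$ is a factor of the concatenation $W_iW_{i+1}\cdots W_{i-1}W_i$. This concatenation lies in $\delta'(u)$, and by the walkability of $\circular{w}$, the word $u$ is a walk in $D$. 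By Lemma~\ref{CircumnavigationsSquareFree}, $u$ is square-free (the degenerate cases $|w|\leq 1$ being vacuous or trivial).

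Next I would apply Theorem~\ref{SquareFreeSubstitution} to the substitution $\delta'\colon V(D)^*\rightarrow 2^{\Gamma^*}$ and the square-free word $u$. Condition (I) asks that $\delta'(v)$ be square-free for every factor $v$ of $u$ of length at most $3$; since each such $v$ is a walk in $D$, there are only finitely many cases, and these are verified by computer exactly as in Lemma~\ref{CircularSquareFreeMorphism}. Condition (II) depends only on the four image types $Q_x, R_x, Q'_x, R'_x$ ranging over $x\in V(D)$, and is likewise a finite check. Once both conditions are confirmed, the set $\delta'(u)$ is square-free, so in particular $U$ is square-free. Since $U$ was an arbitrary conjugate of $W$, the circular word $\circular{W}$ is square-free.

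The main obstacle is the computer verification of conditions (I) and (II) of Theorem~\ref{SquareFreeSubstitution} with the enlarged image set $\{Q_x, R_x, Q'_x, R'_x\}$, compared to the smaller image sets used in Lemmas~\ref{CircularSquareFreeMorphism} and~\ref{SquareFreeHatD}. Because $Q'$ and $R'$ are obtained from $R$ by inserting short factors, one expects no new squares to be introduced by the additional images; still, every combination must be checked exhaustively. Apart from this finite check, the argument is a direct translation of the first paragraph of the proof of Lemma~\ref{CircularSquareFreeMorphism}.
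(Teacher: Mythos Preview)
Your proposal is correct and follows exactly the approach the paper indicates: it is a direct translation of the first paragraph of the proof of Lemma~\ref{CircularSquareFreeMorphism}, replacing $h$ by $\delta'$ and using walkability in $D$ to restrict the finite check of condition~(I) to square-free walks of length at most~$3$. The only content beyond that paragraph is the computer verification of conditions~(I) and~(II) of Theorem~\ref{SquareFreeSubstitution} for the enlarged image set $\{Q_x,R_x,Q'_x,R'_x\}$, which you identify correctly.
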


We are now ready to prove Theorem~\ref{LongCircular}.

\begin{proof}[Proof of Theorem~\ref{LongCircular}]
If $n\not\in \mathcal{B}$, then we used a standard backtracking algorithm to show that there is no extremal square-free ternary circular word of length $n$.  

Suppose otherwise that $n\in \mathcal{B}$.  First suppose that $n\geq 470$.  Then there are nonnegative integers $a$, $b$, $c$, and $d$ such that $41a+52b+61c+64d=n$, and the sum $a+b+c+d$ is even.  (For $n\geq 4172$, this claim follows from Lemma~\ref{EvenPostage}, and we verified the remaining cases by computer.)  By Lemma~\ref{EvenLengthWalks}, there is a square-free circular word $\circular{w}$ of length $a+b+c+d$ that is walkable in the digraph $D$.  Evidently, there is a word $W$ in $\delta'(w)$ of length $n$.  By Lemma~\ref{deltaCircular}, the circular word $\circular{W}$ is square-free.  Since all words in the set $\delta'(x)$ are nearly extremal square-free for every $x\in V(D)$, it follows that the circular word $\circular{W}$ is extremal square-free.

So we may assume that $n<470$.   In this case, we found an extremal square-free circular word of length $n$ by computer search.
\end{proof}

Note that the proof of Theorem~\ref{LongCircular} presented in this section can be adapted to provide yet another alternate proof of Theorem~\ref{CurrieTernary}.  (Note that while Theorem~\ref{CurrieTernary} was used in the proof of Lemma~\ref{EvenLengthWalks}, an inductive argument could be used instead.)

\section{Conclusion}

We have completely described the attainable lengths of extremal square-free ternary words, and of extremal square-free ternary circular words.  It is well-known that the number of square-free ternary words of length $n$ grows exponentially in $n$~\cite{Brandenburg1983}; currently, the best known bounds on the growth rate are due to Shur~\cite{Shur2012}.  It is also known that the number of square-free ternary circular words of length $n$ grows exponentially in $n$~\cite{Shur2010}.  Using these results together with the results of this paper, one can show that both the number of extremal square-free ternary words of length $n$, and the number of extremal square-free ternary circular words of length $n$, grow exponentially in $n$.

Surprisingly, over larger alphabets, it appears that there are no extremal square-free words.  The following is a minor variant of a conjecture of Grytczuk et al.~\cite[Conjecture~2]{Grytczuk2019}.

\begin{conjecture}\label{GrytczukConjecture}
Let $\Sigma$ be a fixed alphabet of size at least $4$.  Then there are no extremal square-free words over $\Sigma$.
\end{conjecture}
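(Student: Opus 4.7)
The plan for attacking Conjecture~\ref{GrytczukConjecture} is to show that in any square-free word $w$ over an alphabet $\Sigma$ with $|\Sigma|\ge 4$, some position admits a safe insertion: there exist a position $i\in\{0,1,\dots,|w|\}$ and a letter $a\in\Sigma$ such that inserting $a$ at position $i$ produces a square-free word. For each $i$, let $B_w(i)\subseteq\Sigma$ denote the set of letters whose insertion at position $i$ creates a square. The task is then to exhibit some $i$ with $B_w(i)\neq\Sigma$.

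The first step is a local analysis of $B_w(i)$. If inserting $a$ at position $i$ creates a new square $xx$ of length $2k$, then $2k-1$ of the $2k$ positions of $xx$ are occupied by letters of $w$, and the inserted letter $a$ is forced to equal a specific letter of $w$ at distance $k$ from position $i$ (namely the image of the inserted position under the shift by $k$). Hence for each pair $(k,s)$ giving a possible placement of a length-$2k$ square around position $i+1$, there is at most one letter $a$ whose insertion realizes that particular square, and moreover the existence of that square is entirely a condition on the factor of $w$ of length roughly $2k$ centered at position $i$. In this way every element of $B_w(i)$ is certified by an explicit ``near-square'' in the neighbourhood of $i$ in $w$.

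The decisive step, and the main obstacle, is to show that $|\Sigma|\ge 4$ distinct near-squares cannot simultaneously surround a single position in a square-free $w$. A natural attempt is to take two distinct $a,a'\in B_w(i)$, each witnessed by a near-square (possibly of very different lengths), and combine them to force a genuine square in $w$ at some nearby position, yielding a contradiction to square-freeness of $w$. The difficulty is that near-squares of widely different lengths interact in subtle ways: a single long near-repetition can block one letter at many positions simultaneously, so a purely local bound on $|B_w(i)|$ is unavailable, and the argument must be globalized over all $i$ at once. Techniques like entropy compression or a Lovász-local-lemma-style analysis on the bipartite ``blocking'' structure between positions and letters might circumvent this, but making them quantitative at $|\Sigma|=4$ (the tightest case) looks hard.

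As preparatory evidence, one should run a backtracking search to verify that no extremal square-free word of length up to some threshold exists over alphabets of size $4$, $5$, and $6$; this both corroborates the conjecture and provides a base case for any inductive argument. For $|\Sigma|\ge 5$ the counting has more slack, so the approach above may succeed more easily, and a proof in that regime would already be a significant partial result. The case $|\Sigma|=4$ is likely to require a genuinely new structural insight beyond the multi-valued substitution machinery developed in this paper.
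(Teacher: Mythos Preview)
The statement you are addressing is a \emph{conjecture} in the paper, not a theorem: the paper offers no proof and explicitly presents it as open, remarking only that the case $|\Sigma|=4$ would be most interesting and that even larger alphabets would be worth resolving. So there is no ``paper's own proof'' against which to compare your attempt.

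Your proposal is likewise not a proof but a research plan, and you are candid about this: you identify the decisive step as showing that four distinct near-squares cannot simultaneously block all letters at some position, and you immediately concede that this step is the main obstacle, that a purely local bound on $|B_w(i)|$ is unavailable, and that the case $|\Sigma|=4$ ``is likely to require a genuinely new structural insight.'' That is an honest assessment, but it means the proposal contains no argument that actually establishes the conjecture for any alphabet size. The outlined ideas (local analysis of blocking sets, combining two near-squares to force a genuine square, entropy compression or Lov\'asz local lemma) are reasonable avenues to explore, but none is carried far enough here to constitute even a partial result. In short: there is nothing incorrect in what you wrote, but there is also no proof---which matches the paper's own status on this conjecture.
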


In other words, Conjecture~\ref{GrytczukConjecture} says that every square-free word over an alphabet $\Sigma$ of size greater than $3$ has at least one square-free extension over $\Sigma$.  While it would be most interesting to establish Conjecture~\ref{GrytczukConjecture} in the case $|\Sigma|=4$, establishing the conjecture for larger alphabets would also be interesting.

Finally, we note that Harju~\cite{Harju2019} recently introduced the related notion of \emph{irreducibly square-free words}; these are square-free words in which the removal of any interior letter produces a square.  In particular, Harju demonstrated that there are irreducibly square-free ternary words of every sufficiently large length, just as we have shown for extremal square-free ternary words.  However, the situation appears to be quite different over larger alphabets.  Let $n\geq 4$, and let $\Sigma_n=\{\tt{1},\tt{2},\ldots,\tt{n}\}$.  For every $3\leq k\leq n$, define $u_k=\tt{k2k}$.  Then define
\[
w_n=\tt{121}u_3\tt{121}u_4\cdots\tt{121}u_n\tt{121}=\tt{121323121424}\cdots\tt{121n2n121}.
\]
It is straightforward to verify that $w_n$ is irreducibly square-free.  So there are irreducibly square-free words over any fixed alphabet, while Conjecture~\ref{GrytczukConjecture} suggests that this is not the case for extremal square-free words.


\begin{thebibliography}{1}

	\bibitem{BEM1979} D. R. Bean, A. Ehrenfeucht, and G. F. McNulty, Avoidable patterns in strings of symbols, \emph{Pacific J. Math.} \textbf{85} (1979), 261--294.
	
	\bibitem{Berstel1995}
J. Berstel, Axel Thue's papers on repetitions in words: A translation,
  \emph{Publications du LaCIM (Universit{\'{e}} du Qu{\'{e}}bec {\`{a}}
  Montr{\'{e}}al)}, vol.~20, 1995.
  
  	\bibitem{BerstelPerrin2007}
  	J. Berstel and D. Perrin, The origins of combinatorics on words, \emph{European J. Combin.} \textbf{28} (2007), 996-1022.
  	
  	\bibitem{Brandenburg1983}
F.~J. Brandenburg, Uniformly growing $k$-th power-free homomorphisms,
  \emph{Theoret. Comput. Sci.} \textbf{23} (1983), 69--82.
	
    \bibitem{Currie2002} J. D. Currie, There are ternary circular square-free words of length $n$ for $n\geq 18$, \emph{Electron. J. Combin.} \textbf{9} (2002), \#N10.
    
    \bibitem{ClokieGabricShallit2019} T. Clokie, D. Gabric, and J. Shallit, Circularly squarefree words and unbordered conjugates: A new approach, in \emph{Proc. WORDS 2019}, Lect. Notes in Comput. Sci., Vol. 11682, Springer, 2019, pp. 133--144. 
    
	\bibitem{Grytczuk2019} J. Grytczuk, H. Kordulewski, and A. Niewiadomski,  Extremal square-free words, preprint, 2019.  Available at \url{https://arxiv.org/abs/1910.06226}.
	
	\bibitem{Harju2019} T. Harju, Disposability in square-free words, preprint, 2019.  Available at \url{https://arxiv.org/abs/1911.09973}.
	    
    \bibitem{KarhumakiShallit2004}
    J. Karhum{\"{a}}ki and J. Shallit, Polynomial versus exponential growth in repetition-free binary words, \emph{J. Combin. Theory Ser. A} \textbf{105} (2004), 335--347.
    
    \bibitem{LothaireAlgebraic}
M. Lothaire, \emph{{Algebraic Combinatorics on Words}}, Cambridge University
  Press, 2002.
  
  \bibitem{FrobeniusProblemBook}
  J. L. Ram\'irez Alfons\'in, \emph{The Diophantine Frobenius Problem}, Oxford University Press, 2005.
    
    \bibitem{RampersadShallitChapter}
    N. Rampersad and J. Shallit, Repetitions in words, in V. Berth\'e and M. Rigo, eds., \emph{Combinatorics, Words and Symbolic Dynamics}, Encyclopedia of Mathematics and Its Applications, vol. 159, Cambridge University Press, 2016, pp. 101--150.
    
    \bibitem{ShallitConjecture} J. Shallit, personal communication, October 2019.
    
	\bibitem{Shur2010} A. M. Shur, On ternary square-free circular words, \emph{Electron. J. Combin.} \textbf{17} (2010), \#R140.
	
	\bibitem{Shur2012}
A.~M. Shur, Growth properties of power-free languages, \emph{Comput. Sci.
  Rev.} \textbf{6} (2012), 187--208.
	
	
\end{thebibliography}
\end{document}